\title{On Forbidden Submatrices}
\author{Ar\`es M\'eroueh\footnote{Department of Pure Mathematics and Mathematical Statistics, Centre for Mathematical Sciences, Wilberforce Road, Cambridge CB3 0WB, United Kingdom; e-mail: {\tt a.j.meroueh@dpmms.cam.ac.uk}}}
\newcommand{\C}{\mathcal{C}}
\newcommand{\A}{\mathcal{A}}
\newcommand{\F}{\mathcal{F}}
\newcommand{\Cia}{\C_i(\A)}
\newcommand{\tA}{\tilde{\mathcal{A}}}
\newcommand{\PO}{\mathcal{P}}
\newcommand{\N}{\mathbb{N}}
\newcommand{\fs}{\mathrm{fs}}
\newcommand{\supp}{\mathrm{supp}}
\begin{document}

\date{}
\maketitle
\begin{abstract}
Given a $k\times l$ $(0,1)$-matrix $F$, we denote  by $\mathrm{fs}(m,F)$ the largest number for which there is an $m \times \mathrm{fs}(m,F)$  $(0,1)$-matrix with no repeated columns and no induced submatrix equal to $F$. A conjecture of Anstee, Frankl, F\"{u}redi and Pach states that $\mathrm{fs}(m,F) = O(m^k)$ for a fixed matrix $F$. The main results of this paper are that $\mathrm{fs}(m,F) = m^{2+ o(1)}$  if $k=2$ and  that $\mathrm{fs}(m,F) = m^{5k/3 -1 + o(1)}$ if $k\geq 3$.
\end{abstract}

\section{Introduction}
 How large can a matrix of zeros and ones be if it does not contain a given matrix $F$ as a submatrix? There are essentially two different ways in which this question can be interpreted, depending on what we mean by \emph{containing} $F$. A common approach is to say that a matrix $M$ \emph{contains} a matrix $F$ if $F$ can be obtained from $M$ by deleting some rows and columns of $M$ and then possibly turning some ones into zeros. In this setting we may view the ones of $M$ as representing edges of a bipartite graph $G$ so that forbidding $F$ in $M$ amounts to forbidding an ordered bipartite subgraph of $G$. Keeping this analogy in mind, we can restate the general extremal question in a more rigorous way: what is the maximal number of ones in an $n\times m$ matrix $M$ which does not contain $F$? 

The problem which we consider here, however, has a different flavour. We will say that a matrix $M$ \emph{contains} a matrix $F$ if $F$ can be obtained from $M$ by deleting some rows and columns of $M$. That is, we do not allow ourselves to turn some ones into zeros. In other words, we forbid $F$ as an \emph{induced} submatrix of $M$. Asking for the maximal number of ones in a matrix $M$ not containing $F$ no longer makes sense in this context. Rather, we make the following definition. 

\newtheorem{dfs}{Definition}[section]
\begin{dfs}
A $(0,1)$ matrix is said to be \emph{simple} if its columns are pairwise distinct. We define $\fs(m,F)$ to be the maximal number of columns of a simple matrix $M$ on $m$ rows which does not contain $F$. 
\label{dfs}
\end{dfs}

Notice that $\fs(m,F)$ is always defined and in fact is no larger than $2^m$. The problem of determining $\fs(m,F)$ for a given matrix $F$ was first raised in \cite{ffp} and \cite{ansteefuredi} where Anstee,  Frankl, F\"{u}redi and Pach  made the following conjecture.

\newtheorem{conjecture}[dfs]{Conjecture }
\begin{conjecture}[Frankl, F\"{u}redi and Pach \cite{ffp}, Anstee and F\"{u}redi \cite{ansteefuredi}]
\label{conjecture}
Let $F$ be a $k\times l$ $(0,1)$ matrix. There exists a constant $c_F$ such that $$\fs(m,F) \leq c_F m^{k}. $$ 
\end{conjecture}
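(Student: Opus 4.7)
The plan is to induct on $k$, the number of rows of $F$. The base case $k = 1$, where $F$ is a single row, reduces to a pigeonhole argument on the induced pattern on each row of $M$ and yields $\fs(m, F) = O(m)$.

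For the inductive step, fix a $k \times l$ matrix $F$ and a simple matrix $M$ on $m$ rows avoiding $F$ as an induced submatrix. Pick a row $r$ of $M$ (say the top row) and delete it to obtain $M'$ on $m - 1$ rows. By simplicity of $M$, each column of $M'$ is the projection of either one column of $M$, or of two columns of $M$ that agree off row $r$ and disagree in row $r$. Partition the columns of $M$ accordingly into a set $U$ of \emph{unique} columns and a set of pairs; let $R$ be the multiset of pair-projections (one representative per pair) restricted to the $m - 1$ remaining rows. Then $|M| = |U| + 2|R|$, with both $U$ and $R$ simple on $m - 1$ rows, and since $U$ is just a sub-collection of columns of $M$, we have $|U| \leq \fs(m-1, F)$.

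For $R$, let $F_i$ denote the $(k-1) \times l$ matrix obtained by removing row $i$ of $F$. The key hope is that $R$ avoids some $F_i$ as an induced submatrix: if $R$ contained $F_i$ induced on columns $c_1 < \cdots < c_l$, the intended ``lift'' would pick, from each pair, the member whose row-$r$ entry matches the corresponding entry of row $i$ of $F$, and reinsert row $r$ at position $i$ to realise $F$ induced in $M$. If successful, the inductive hypothesis would give $|R| \leq \fs(m-1, F_i) = O(m^{k-1})$, and the recursion $\fs(m, F) \leq \fs(m-1, F) + O(m^{k-1})$ would telescope to the desired bound $\fs(m, F) = O(m^k)$.

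The main obstacle is that the lift does not preserve column order. Within a pair, the $0$-member and $1$-member occupy specific columns of $M$; when the desired row-$r$ values force us to swap which member we pick in some pairs but not others, interleaved pairs can then appear in $M$ in a different order than their representatives do in $R$. For example, if pair $A$ occupies columns $(1, 5)$ and pair $B$ occupies columns $(2, 4)$, picking columns $5$ and $4$ reverses the order of $A$ and $B$. Thus the lifted submatrix in $M$ is only guaranteed to be a column permutation of $F$, and $R$ only avoids $F_i$ up to this ambiguity---a substantially weaker condition than avoiding $F_i$ induced. Overcoming this seems to require either a much more delicate decomposition (perhaps stratifying pairs by the column gap between their members so as to force order-preserving lifts), or simultaneously forbidding in $R$ the full orbit of $F_i$ under column permutations without an uncontrolled blow-up of the induction constant. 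I expect this reconciliation to be the crux of any successful attack on the conjecture, and the weaker bounds proved in the sequel presumably reflect how far from this ideal the current tools reach.
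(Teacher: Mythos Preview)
The statement is Conjecture~\ref{conjecture}, which the paper does \emph{not} prove: its main contribution is the weaker Theorem~\ref{maintheorem}, giving $\fs(m,F)=m^{2+o(1)}$ for $k=2$ and $\fs(m,F)=m^{5k/3-1+o(1)}$ for $k\ge 3$, and the text explicitly records that the conjecture remains open for $k\ge 2$. There is therefore no proof in the paper to compare your attempt against.

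Your proposal is not a proof either, and you say so yourself. The row-deletion decomposition you describe is the standard one in the forbidden-configurations literature, and the obstacle you isolate is genuine and is indeed the crux: under the ordered notion of containment used here, lifting a copy of $F_i$ found in $R$ back to $M$ requires selecting, for each pair, the member whose row-$r$ entry matches $F$, and those members may sit in $M$ in a different relative column order than the pair representatives did in $R$. Hence $R$ is only forced to avoid column-permutations of $F_i$, which does not close the induction. (There is a parallel row-order issue you pass over: the deleted row $r$ occupies a fixed position in $M$, so it can play the role of row $i$ of $F$ only when $r$ lies between the $(i{-}1)$st and $i$th of the chosen rows; thus which $F_i$ one might hope to forbid in $R$ already depends on the row positions, and one cannot simply appeal to a single fixed $F_i$.) The paper's route through contributions, compressions and Sauer--Shelah (Sections~2--4) is precisely a way to make progress while sidestepping these ordering obstructions, at the price of the weaker exponent; your closing sentence anticipates this correctly.
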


Frankl, F\"{u}redi and Pach \cite{ffp} showed that $$\fs(m,F) = O_F(m^{2k-1}).$$ This was later improved by Anstee \cite{ansteebound} to $$\fs(m,F) = O_F(m^{2k-1 - \epsilon })$$ where $\epsilon = (k-1)/(13\log_2 l)$. While this is a significant improvement for small $l$, for large $l$ this is approximately $m^{2k-1}$ so that the gap between the conjectured bound and the current best bound is quite large. 

The problem was further studied for various fixed values of $k$ and $l$, as well as for some small matrices $F$ in \cite{ansteefuredi} by Anstee and F\"{u}redi. More recently in  \cite{ansteechen} the conjecture was shown to hold for some families of $2\times l$ matrices by Anstee and Chen. However, to date the conjecture is still unknown for $k\geq 2$. In this paper we aim to prove the following new upper bound for any forbidden submatrix $F$.

\newtheorem{maintheorem}[dfs]{Theorem}
\begin{maintheorem}
\label{maintheorem}
For any fixed $2\times l$ matrix $F$, $$\fs(m,F) = m^{2+ o(1)}. $$
For $k\geq 3$ and any fixed $k\times l$ matrix $F$, $$ \fs(m,F) = m^{5k/3 -1 +  o(1)}.$$ 
\end{maintheorem}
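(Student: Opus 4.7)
My plan is to treat the two cases separately, since the bound claimed for $k=2$ is essentially sharp---matching the Anstee--Frankl--F\"uredi--Pach conjecture up to the $m^{o(1)}$ factor---while the bound for $k\geq 3$ merely improves on the classical $m^{2k-1}$ by saving $k/3$ in the exponent. The appearance of $5/3$ is suggestive of a K\H{o}v\'ari--S\'os--Tur\'an argument: $\mathrm{ex}(n, K_{3,3}) = O(n^{5/3})$, and a saving of $m^{5/3}$ per row of $F$ in a recursive step would compound to give exactly the exponent $5k/3-1$ in the theorem.

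\textbf{The case $k=2$.} Let $M$ be a simple $m\times N$ matrix with no induced $F$. For each pair of rows $(i,j)$, read the columns left-to-right to produce a word $\sigma_{ij}$ of length $N$ over the four-letter alphabet $\{00,01,10,11\}$. The no-induced-$F$ constraint becomes: $\sigma_{ij}$ avoids a specified length-$l$ pattern as a subsequence. A Davenport--Schinzel-type argument then controls the number of ``maximal runs'' of a fixed letter in $\sigma_{ij}$ by $N\cdot \alpha(N)^{O(1)} = N^{1+o(1)}$. Summing these counts over all $\binom{m}{2}$ pairs of rows and relating the total to $N$ via the simplicity of $M$---which forces every pair of distinct columns to be separated by at least one row, and hence to generate an alternation in some $\sigma_{ij}$---should yield $N \leq m^{2+o(1)}$.

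\textbf{The case $k\geq 3$.} Here I would proceed by induction on $k$, aiming for a step of the form $\fs(m,F) \leq \fs(m, F')\cdot m^{5/3+o(1)}$, where $F'$ is obtained from $F$ by deleting a row. Given $M$ with many columns avoiding $F$, the idea is to partition its columns not by the entries in a single row (which would cost a factor of roughly $m$ per row and recover the classical $m^{2k-1}$ bound), but according to their traces on a carefully chosen auxiliary set of rows; within each part, avoidance of $F$ should reduce to avoidance of $F'$, allowing the induction hypothesis to apply. The total number of parts should then be controlled by a K\H{o}v\'ari--S\'os--Tur\'an inequality applied to an auxiliary bipartite graph encoding the partition, with the relevant $K_{3,3}$-freeness condition coming from the structure of $F$.

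\textbf{Main obstacle.} The crux is almost certainly the inductive step for $k\geq 3$: one has to find a partition of columns that is simultaneously (a) fine enough for the reduction from $F$ to $F'$ to go through, and (b) coarse enough that the number of parts is only $m^{5/3+o(1)}$ rather than the naive $m^2$. This balance will likely demand a probabilistic argument---such as dependent random choice or a shattering-type lemma---to locate the auxiliary row set and extract the $K_{3,3}$-free bipartite structure underlying the K\H{o}v\'ari--S\'os--Tur\'an bound. The $k=2$ argument, while conceptually cleaner, also contains a nontrivial piece in the Davenport--Schinzel step, which is what produces the unavoidable $m^{o(1)}$ factor.
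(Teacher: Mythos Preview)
Your proposal diverges entirely from the paper's method, and both halves contain genuine gaps rather than merely unwritten details. For $k=2$, the Davenport--Schinzel analogy fails at the outset: each word $\sigma_{ij}$ lives over a \emph{fixed} four-letter alphabet, so avoiding a specific length-$l$ subsequence imposes no bound whatsoever on its length (e.g.\ if $F=\left(\begin{smallmatrix}0&1\\0&1\end{smallmatrix}\right)$ the only constraint is that the letter $00$ never follows the letter $11$, and $\sigma_{ij}$ can still be arbitrarily long). Davenport--Schinzel bounds are nontrivial only when the alphabet grows; summing ``alternations'' over the $\binom{m}{2}$ row-pairs and invoking simplicity does not repair this, since simplicity supplies only one separating row per pair of columns and that double count gives nothing close to $N\le m^{2+o(1)}$. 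For $k\ge 3$, the K\H{o}v\'ari--S\'os--Tur\'an step is numerology rather than an argument: $F$ is an \emph{arbitrary} $k\times l$ matrix, so there is no mechanism by which ``the structure of $F$'' forces a $K_{3,3}$-free auxiliary bipartite graph---you have simply reverse-engineered the exponent $5/3$ from $\mathrm{ex}(n,K_{3,3})$. Even the arithmetic does not match: your recursion $\fs(m,F)\le m^{5/3+o(1)}\fs(m,F')$ with base $m^{2+o(1)}$ at $k=2$ yields $m^{5k/3-4/3+o(1)}$, strictly stronger than the stated $m^{5k/3-1+o(1)}$, which should itself be a warning sign.

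The paper's route is quite different and does not pass through Tur\'an-type graph bounds at all. It introduces \emph{contributions}---column-disjoint simple $k\times 2^k$ submatrices on a common $k$-set of rows---and shows (Lemma~\ref{basiclemma}) that if every simple $m\times\lfloor cm^{k-1}\rfloor$ matrix makes $m^a$ contributions then $\fs(m,F)=O_F(m^{2k-1-a})$. The real work is then a bootstrap: compress the associated set family to a down-set (Corollary~\ref{isetsplus}), use this to exhibit many shattered $k$-sets and hence many contributions, and feed the resulting improvement back into the hypothesis. Iterating drives $a$ to $1-o(1)$ when $k=2$ (Theorem~\ref{2FS}) and to $k/3-o(1)$ when $k\ge 3$ (Theorem~\ref{preFS}); the value $2k/3-1$ appears as the fixed point $\gamma$ of $k-1-\gamma=(\gamma+1)/2$ governing the iteration in Lemma~\ref{bstep}, and has nothing to do with $K_{3,3}$.
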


This gives further evidence for Conjecture \ref{conjecture}. For $k\geq 4$ the bound given by the theorem may be improved for some fixed values of $k$, but only by a small constant term in the exponent (see Section 6). However some new ideas would be required to bring the coefficient of $k$ in the exponent  down to a constant smaller than $5/3$. 
\section{Contributions}

Given two integers $a,b$ such that $a \geq b$, we denote by $[a]^{(b)}$ the set of subsets of $\{1,2, \ldots, a \}$ of size $b$, and by  $[a]^{(\geq b)}$ the set of subsets of $\{1,2, \ldots, a \}$ of size at least $b$. Let $M$ be an $m \times n$ simple matrix. Given $s \leq m$, $t \leq n$, and $R \in [m]^{(s)}$, $C \in [n]^{(t)}$ we denote by $M[R,C]$ the $s\times t$ submatrix of $M$ whose rows are indexed by $R$ and columns by $C$. The next definition generalizes the concept of \emph{contributions} introduced by Anstee and Chen in \cite{ansteechen}. 

\newtheorem{contribs}{Definition}[section]
\begin{contribs}
\label{contribs}
A collection $(R_1,C_1),(R_2,C_2),\ldots, (R_i,C_i) \in [m]^{(k)}\times [n]^{(2^k)}$ is said to form a set of $i$ \emph{contributions} if $M[R_j,C_j]$ is a simple matrix for all $j\leq i$ and also for any $j \neq j'$, if $R_j = R_{j'}$ then either $\max C_j < \min C_{j'}$ or $\max C_{j'} < \min C_{j} $. If there exists a set of at least $i$ contributions for $M$ then we say that $M$ makes $i$ contributions. 
\end{contribs}

In other words, a matrix makes $i$ contributions if there exist $i$ simple submatrices on $k$ columns and $2^k$ rows (with repeats allowed), such that two submatrices appearing on the same set of rows are such that the last column of one appears before the first column of the other. We also call each such submatrix \emph{a contribution}. 

The following elementary lemma motivates the introduction of contributions.
\newtheorem{basiclemma}[contribs]{Lemma}
\begin{basiclemma}
\label{basiclemma}
Let $0\leq a \leq k-1$. Suppose that there exists $c>0$ such that every simple $m\times \lfloor cm^{k-1} \rfloor$ matrix makes $m^a$ contributions. Then for any $k\times l$ matrix $F$, $$\fs(m,F) = O_F(m^{2k-1-a}). $$
\end{basiclemma}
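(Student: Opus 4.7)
The plan is a block-decomposition plus pigeonhole argument. Assume for contradiction that $M$ is a simple $m \times n$ matrix that does not contain $F$, and write $K = \lfloor cm^{k-1}\rfloor$. Divide the columns of $M$ into $\lfloor n/K \rfloor$ consecutive blocks of $K$ columns each (discarding any remainder). Each such block, with all $m$ rows of $M$ retained, is itself a simple $m \times K$ matrix, hence by hypothesis makes at least $m^a$ contributions.

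The key observation is that the contributions from different blocks may be concatenated into a single valid set of contributions for $M$. Indeed, two contributions whose column sets lie in distinct blocks automatically satisfy the separation condition $\max C_j < \min C_{j'}$ (or the reverse), while those within a common block already satisfy it by construction. Consequently $M$ itself makes at least $\lfloor n/K\rfloor \cdot m^a$ contributions.

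There are only $\binom{m}{k} \leq m^k$ possible row sets $R \in [m]^{(k)}$, so by pigeonhole some row set $R$ supports at least $\lfloor n/K\rfloor m^a / \binom{m}{k}$ contributions. The separation condition linearly orders these by column position. Each contribution is a simple $k \times 2^k$ submatrix on $R$, and since $2^k$ is exactly the number of distinct $0/1$ columns of length $k$, every possible column pattern appears in each such submatrix. If there are at least $l$ contributions on $R$, then we may select from the first $l$ of them (in column order) a column in the $j$-th matching the $j$-th column of $F$; this exhibits $F$ as an induced submatrix of $M$ on rows $R$, a contradiction. Hence $\lfloor n/K\rfloor m^a / \binom{m}{k} < l$, which rearranges to $n = O_F\bigl(K \binom{m}{k}/m^a\bigr) = O_F(m^{2k-1-a})$.

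I expect no serious obstacle: the lemma is essentially clean bookkeeping. The only conceptual points to get right are recognising that a simple $k \times 2^k$ matrix realises every $0/1$ column pattern of length $k$ (so a single contribution suffices to supply any one column of $F$), and that taking the blocks to be column-intervals makes the separation condition automatic across blocks, which is precisely what allows the local block-wise hypothesis to be globalised before applying pigeonhole.
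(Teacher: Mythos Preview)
Your proof is correct and is essentially the same argument as the paper's: block-decompose the columns into pieces of width $\lfloor cm^{k-1}\rfloor$, use additivity of contributions under concatenation, pigeonhole over the $\binom{m}{k}$ row sets, and read off $F$ from $l$ column-separated simple $k\times 2^k$ submatrices on a common row set. The only cosmetic difference is that the paper fixes the number of blocks to be $\lceil (l/k!)m^{k-a}\rceil$ and derives the contribution count directly, whereas you start from an arbitrary $n$ and bound it at the end; the content is identical.
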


\begin{proof}
 It is easy to see that the concatenation of two matrices $M_1$ and $M_2$ making $i_1$ and $i_2$ contributions respectively makes at least $i_1 + i_2$ contributions. Therefore a simple $\lceil (l/k!) m^{k-a} \rceil \lfloor c m^{k-1} \rfloor $ matrix makes at least $l{ m \choose k}$ contributions. Since there are $ {m \choose k}$ possible row indices for these contributions, at least $l$ of them must share the same row index. As the corresponding simple $k\times 2^k$ submatrices of $M$ appear consecutively and each contains all possible columns on $k$ rows, we see that $M$ contains all possible $k\times l$ matrices $F$. This shows that $\fs(m,F) = O(m^{2k-1-a})$, as required.
\end{proof}

In order to to apply Lemma \ref{basiclemma} we need to be able to find contributions in simple matrices. The next proposition, due to Sauer \cite{sauer} and Shelah \cite{shelah}, will prove to be an essential tool in doing so.

\newtheorem{shattered}[contribs]{Propostion}
\begin{shattered}[Sauer \cite{sauer}, Shelah \cite{shelah}]
\label{shattered}
Let $\A \subseteq \mathcal{P}[m]$. If $|\A| \geq { m \choose 0} + { m \choose 1 } + \cdots + {m \choose k-1} + 1$ then there exists $S\in [m]^{(k)}$ such that $|S\cap \A| = 2^k$, where $S\cap \A = \{S\cap A:\, A\in \A\}$. We say that $\A$ \emph{shatters} $S$.
\end{shattered}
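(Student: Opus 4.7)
The plan is to prove the stronger statement due to Pajor: any family $\mathcal{A} \subseteq \mathcal{P}[m]$ shatters at least $|\mathcal{A}|$ subsets of $[m]$. Once this is established, the proposition follows immediately, since a hypothesis $|\mathcal{A}| \geq \binom{m}{0}+\binom{m}{1}+\cdots+\binom{m}{k-1}+1$ forces the number of shattered sets to exceed the total number of subsets of $[m]$ of size at most $k-1$, so some shattered set has size at least $k$, and every $k$-subset of a shattered set is itself shattered.

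I would prove Pajor's inequality by induction on $m$, the case $m=0$ being trivial. For the inductive step, fix any element $x \in [m]$ and split $\mathcal{A}$ according to whether $x$ is present: let $\mathcal{A}_0 = \{A \in \mathcal{A} : x \notin A\}$ and $\mathcal{A}_1 = \{A \setminus \{x\} : A \in \mathcal{A}, \ x \in A\}$, both viewed as families on the ground set $[m]\setminus\{x\}$. Then $|\mathcal{A}_0| + |\mathcal{A}_1| = |\mathcal{A}|$ (sets in $\mathcal{A}$ with $x$ and without $x$ are counted separately, and inside $\mathcal{A}_1$ there is no collision because we only remove $x$ from sets that originally contained it). Writing $\mathcal{S}_0, \mathcal{S}_1$ for the collections of subsets of $[m]\setminus\{x\}$ shattered by $\mathcal{A}_0, \mathcal{A}_1$ respectively, the inductive hypothesis yields $|\mathcal{S}_0| \geq |\mathcal{A}_0|$ and $|\mathcal{S}_1| \geq |\mathcal{A}_1|$.

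The key observation is that every $T \in \mathcal{S}_0 \cup \mathcal{S}_1$ is shattered by $\mathcal{A}$, and moreover every $T \in \mathcal{S}_0 \cap \mathcal{S}_1$ gives rise to another shattered set $T \cup \{x\}$: indeed, all $2^{|T|}$ traces of $T$ occur inside $\mathcal{A}_0$ (giving subsets of $T \cup \{x\}$ not containing $x$ in $\mathcal{A}$), and all $2^{|T|}$ traces occur inside $\mathcal{A}_1$ (giving, upon reinstating the removed $x$, all subsets of $T \cup \{x\}$ that do contain $x$). The sets of the form $T$ (with $x \notin T$) and $T \cup \{x\}$ are disjoint, so the number of subsets of $[m]$ shattered by $\mathcal{A}$ is at least $|\mathcal{S}_0 \cup \mathcal{S}_1| + |\mathcal{S}_0 \cap \mathcal{S}_1| = |\mathcal{S}_0| + |\mathcal{S}_1| \geq |\mathcal{A}_0| + |\mathcal{A}_1| = |\mathcal{A}|$, which completes the induction.

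The only delicate point in the argument is the bookkeeping that $|\mathcal{S}_0 \cup \mathcal{S}_1| + |\mathcal{S}_0 \cap \mathcal{S}_1|$ correctly counts shattered subsets of $[m]$ without double counting, and this is just the identity $|A \cup B| + |A \cap B| = |A| + |B|$ applied to collections of sets that are clearly disjoint after one is shifted by $\cup\{x\}$. No extremal matrix or clever construction is needed; the heart of the proof is choosing the right quantity (number of shattered sets) to induct on rather than trying to track $|\mathcal{A}|$ directly.
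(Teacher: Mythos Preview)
Your proof via Pajor's inequality is correct. The paper does not give a full proof of this proposition (it is cited as a known result), but it does indicate an alternative route: compress $\mathcal{A}$ to the down family $\tilde{\mathcal{A}}$ of Corollary~\ref{isetsplus}; since $|\tilde{\mathcal{A}}|=|\mathcal{A}|$ exceeds $\sum_{i<k}\binom{m}{i}$, some $A\in\tilde{\mathcal{A}}$ has $|A|\ge k$, and as $\tilde{\mathcal{A}}$ is a down family any $k$-subset $S\subseteq A$ satisfies $|S\cap\tilde{\mathcal{A}}|=2^k$; Lemma~\ref{isets} then transfers this back to $\mathcal{A}$.

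The two arguments are closely related but organized differently. Your coordinate-splitting induction proves Pajor's stronger statement directly and is entirely self-contained. The paper's route obtains the same strengthening as a byproduct (every element of the down family $\tilde{\mathcal{A}}$ is shattered by $\tilde{\mathcal{A}}$, hence by $\mathcal{A}$), but it relies on the compression machinery of Section~3, which the paper has developed anyway for its main results. So your approach is more economical as a stand-alone proof, while the paper's sketch fits naturally into its existing framework.
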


An $m\times 1$ column of zeros and ones can be viewed as representing a subset of $[m]$, if we think of this column as an indicator function. Likewise, we may view a simple matrix $M$ as representing the family $\A$ of subsets of $[m]$ where each set of $\A$ corresponds to a column of $M$.  We call $\A$ the family \emph{associated} with $M$. A key observation  is that  if $\A$ shatters a subset $S$ of $[m]$ then there is a simple $|S|\times 2^{|S|}$ submatrix of $M$ whose rows are indexed by $S$. Thus if we consider the family $\A$ associated with a simple $m \times \left( { m \choose 0} + { m \choose 1 } + \cdots + {m \choose k-1} + 1\right)$ matrix $M$ we find by Proposition \ref{shattered} at least one subset of $[m]$ of size $k$ shattered by $\A$ and hence there is at least one contribution in $M$. Therefore any simple $m \times \Omega(m^{k-1})$ simple matrix makes at least one contribution and by Lemma \ref{basiclemma} we have that $\fs(m,F) = O(m^{2k-1})$, thus proving the polynomial upper bound on $\fs(m,F)$ due to Frankl, F\"{u}redi and Pach \cite{ffp}. 

The natural question which arises from this argument is whether one can find substantially more contributions from $\Omega({m^{k-1}})$ pairwise distinct columns. We answer this question in the affirmative: in the next two sections our aim is to show that for  $k=2$ a simple  $m\times \Omega(m)$ matrix makes at least $m^{1-o(1)}$ contributions while for $k\geq 3$ an $m \times \Omega(m^{k-1})$ simple matrix makes at least $m^{k/3 - o(1)}$ contributions. However, we believe that a much stronger result should hold, which would imply Conjecture \ref{conjecture}.

\newtheorem{conjecture2}[contribs]{Conjecture}
\begin{conjecture2}
\label{conjecture2}
There exists a constant $c >0$ such that any simple $m\times \lfloor c m^{k-1} \rfloor$ matrix makes $m^{k-1}$ contributions. 
\end{conjecture2}

\section{$2 \times l$ matrices}

Bearing in mind that a simple matrix $M$ has an associated family $\A$, we begin by defining a useful operation on families of sets called \emph{compression}.

\newtheorem{compressions}{Definition}[section]
\begin{compressions}
Let $m\in \N$ and $i\in [m]$. We call \emph{compressions} the two maps $C_i$ and $\mathcal{C}_i$ defined as follows.

For $A\subseteq [m]$,
$$C_i(A) = \left\{
	\begin{array}{ll}
		A  & \mbox{if } i \not \in A \\
		A \backslash \{i\} & \mbox{if } i \in A.
	\end{array}
\right. $$
For $\mathcal{A} \subseteq \mathcal{P}[m]$,
$$\mathcal{C}_i(\mathcal{A}) = \left\{ C_i(A): A\in \mathcal{A}\right\} \cup \left\{ A: A \in \mathcal{A} \text{ and }  C_i(A) \in \mathcal{A}\right\}. $$

\label{compressions}
\end{compressions}
Recall from Lemma \ref{shattered} that given a family $\A$ of subsets of $[m]$ and a set $S \subseteq [m]$, we denote by $S\cap \A$ the set $ \{S\cap A:\, A\in \A\}$. We next prove a simple but important lemma about compressions. 
\newtheorem{isets}[compressions]{Lemma}

\begin{isets}
Let $S\subseteq [m]$, $i\in [m]$ and $\mathcal{A} \subseteq \mathcal{P}([m])$. Then
$$|S \cap \A| \geq |S \cap \C_i(\A)|. $$

\label{isets}
\end{isets}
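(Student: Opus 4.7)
The plan is to build an injection $\iota: S \cap \Cia \to S \cap \A$, from which the inequality is immediate. First, I would dispose of the easy case $i \notin S$: then $S \cap A = S \cap C_i(A)$ for every $A \subseteq [m]$, and inspecting the definition of $\Cia$ yields $S \cap \Cia = S \cap \A$ on the nose.

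For the substantive case $i \in S$, I would define
\[
\iota(T) = \begin{cases} T & \text{if } T \in S \cap \A, \\ T \cup \{i\} & \text{if } T \in (S \cap \Cia) \setminus (S \cap \A). \end{cases}
\]
The guiding intuition is that a genuinely new intersection $T$ produced by the compression must arise from some $A \in \A$ whose copy of $i$ was stripped by $C_i$, so restoring $i$ returns us to $S \cap \A$. To verify that $\iota$ lands in $S \cap \A$ in the second branch, pick a witness $A' \in \Cia$ with $S \cap A' = T$; since $T \notin S \cap \A$ we cannot have $A' \in \A$, so by Definition~\ref{compressions} the only possibility is $A' = A \setminus \{i\}$ with $A \in \A$, $i \in A$, and $A \setminus \{i\} \notin \A$. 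This forces $i \notin T$ and $S \cap A = T \cup \{i\} \in S \cap \A$, as needed.

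The main obstacle is injectivity. The only delicate subcase is $T_1 \in (S \cap \A) \cap (S \cap \Cia)$ and $T_2 \in (S \cap \Cia) \setminus (S \cap \A)$ with $T_1 = T_2 \cup \{i\}$. To rule this out, note that $i \in T_1$ and $T_1 \in S \cap \Cia$ force a witness $A_1' \in \Cia$ with $i \in A_1'$. Unpacking Definition~\ref{compressions}, the only way for a set containing $i$ to appear in $\Cia$ is for it to already lie in $\A$ with its $i$-deletion also in $\A$; hence $A_1' \setminus \{i\} \in \A$, and $S \cap (A_1' \setminus \{i\}) = T_1 \setminus \{i\} = T_2$ places $T_2 \in S \cap \A$, contradicting the hypothesis. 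The remaining subcases of injectivity (both arguments in the same branch of $\iota$) are immediate.
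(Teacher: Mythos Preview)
Your proof is correct and follows essentially the same approach as the paper: both construct an injection by sending the ``new'' traces $T \in (S \cap \Cia) \setminus (S \cap \A)$ to $T \cup \{i\}$, and your delicate injectivity subcase is precisely the paper's verification that $\psi(B) \notin S \cap \Cia$. The only cosmetic differences are that you extend the injection to all of $S \cap \Cia$ via the identity on the overlap and treat $i \notin S$ separately, whereas the paper handles that case vacuously.
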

\begin{proof}
Define the map $\psi :S \cap \Cia  \, \backslash\,  S\cap \A  \longrightarrow S \cap \A \, \backslash\, S \cap \Cia$ by $\psi(B) = B \cup\{i\}$. We will check that $\psi$ is injective and well-defined and this proves the lemma. Let $B \in S \cap \Cia  \, \backslash\,  S\cap \A$. Then $B = S \cap (A \backslash \{i\})$ for some $A\in \A$ with $C_i(A) \not \in \A$ (so that $i\in A$). Thus $i \not \in B$ and hence $\psi$ is injective. All that remains is to show that $\psi$ is well-defined, that is $\psi(B) \in  S \cap \A \, \backslash\, S \cap \Cia$. Certainly $B \cup \{i \} = S \cap A$ so $B \in S \cap \A$. Suppose for a contradiction that $B \cup \{i \} \in S \cap \Cia$. Then $B \cup \{i\} = S \cap A'$ where $A' \in \Cia$ and $i\in A'$,  hence $A' \backslash \{i \} \in \A$ meaning $B = S \cap (A' \backslash \{i\}) \in S \cap \A$, contradicting $B \not \in S\cap \A$. 
\end{proof}

\newtheorem{isetsplus}[compressions]{Corollary}

\begin{isetsplus}
Let $\A$ be a family of subsets of $[m]$. There exists a family $\tA$ of subsets of $[m]$ such that for any $A \in \tA$ $$\PO(A) \subseteq \tA $$ and moreover for any $S \subseteq [m]$ $$ |S\cap \A| \geq |S \cap \tA|.$$
\label{isetsplus}
\end{isetsplus}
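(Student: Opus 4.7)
The plan is to obtain $\tA$ by repeatedly applying the compression operators $\C_i$ until the family becomes stable under all of them, and then to read off the two required properties from stability and from iterated use of Lemma \ref{isets}.

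More precisely, I would define a potential function $w(\mathcal{B}) = \sum_{B \in \mathcal{B}} |B|$, and observe that if $\C_i(\mathcal{B}) \neq \mathcal{B}$ then there exists some $A \in \mathcal{B}$ with $i \in A$ and $A\setminus\{i\} \notin \mathcal{B}$, so in $\C_i(\mathcal{B})$ the set $A$ is replaced by $A\setminus\{i\}$, which strictly decreases $w$ by at least $1$. I would then start from $\A_0 = \A$ and, so long as there is some $i \in [m]$ with $\C_i(\A_t) \neq \A_t$, set $\A_{t+1} = \C_i(\A_t)$. Since $w$ is a non-negative integer that strictly decreases at each step, the process terminates in a family $\tA$ that is stable under $\C_i$ for every $i \in [m]$.

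To see that $\tA$ is downward closed, suppose $A \in \tA$ and $i \in A$. Since $\C_i(\tA) = \tA$, I would look at which part of the union in Definition \ref{compressions} places $A$ in $\C_i(\tA)$: if $A$ comes from the second set, then already $A\setminus\{i\} = C_i(A) \in \tA$; otherwise $A = C_i(A')$ for some $A' \in \tA$, but since $i \in A$ the only possibility is $A = A' \in \tA$ with $A\setminus\{i\}$ therefore lying in $\tA$ from the first piece. Either way $A\setminus\{i\} \in \tA$, so removing elements one at a time shows $\PO(A) \subseteq \tA$ by induction on $|A|$.

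Finally, for any $S \subseteq [m]$, iterated application of Lemma \ref{isets} along the chain $\A_0, \A_1, \ldots, \tA$ gives
\[
|S \cap \A| = |S \cap \A_0| \geq |S \cap \A_1| \geq \cdots \geq |S \cap \tA|,
\]
as required. The only subtle point is the termination argument; the set-theoretic identification of the stable family with a downward closed family is then essentially forced by the definition of $\C_i$.
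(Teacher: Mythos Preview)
Your proof is correct and follows essentially the same approach as the paper's: iterate compressions until stable, using the potential $\sum_{A}|A|$ for termination, then read off downward closure from stability and the trace inequality from Lemma~\ref{isets}. Your case analysis for downward closure is slightly more elaborate than necessary (indeed, if $A\in\tA$ and $i\in A$ then simply $A\setminus\{i\}=C_i(A)\in\{C_i(B):B\in\tA\}\subseteq\C_i(\tA)=\tA$, so the ``otherwise'' branch is in fact vacuous), but the conclusion is sound.
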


\begin{proof}
Start with $\A_0 = \A$ and thereafter if there exists $i$ such that $\A_j \neq \C_i(\A_j)$ we let $A_{j+1} =  C_i(\A_j)$. If there is no such $i$ then the process stops and we let $\tA$ be the last family obtained by this process. It is clear that this process does eventually stop since if  $\A_j \neq \C_i(\A_j)$ then $\sum_{A\in A_{j+1}} |A| < \sum_{A\in \A_j}|A|$. The fact that $\PO(A) \subseteq \A$ for any $A \in \tA$ follows from the fact that $\tA$ is stable under taking compressions, and the fact that $ |S\cap \A| \geq |S \cap \tA|$ for any $S \subseteq [m]$ from Lemma \ref{isets}.
\end{proof}

A family $\A$ satisfying the condition $\PO(A) \subseteq \A$ for any $A\in \A$ is called a \emph{down} family.

The utility of Lemma \ref{isets} now becomes apparent: for example, suppose one wishes to prove that a simple matrix $M$ makes a contribution. One could consider the associated family $\A$ and then the family $\tA$ given by Corollary \ref{isetsplus}. If $\tA$ contains a set $X$ of size $k$ then, as it is a down family, we have $|X \cap \tA| \geq 2^k$. Then by Corollary \ref{isetsplus} $|X \cap \A| \geq 2^k$, so that the submatrix of $M$ whose rows are indexed by $X$ makes at least one contribution. This is in fact one way in which Proposition \ref{shattered} may be proved.

It turns out that it is possible to find many more contributions in a simple $m\times \Omega(m^{k-1})$ matrix by using Corollary \ref{isetsplus}. The case $k=2$ is simplest and is treated in the rest of this section (although Lemma \ref{mstep} below is stated for general $k$, we will only apply it for $k=2$). For $k\geq 3$ a slightly more involved argument is given in Section 4. 

\newtheorem{mstep}[compressions]{Lemma}
\begin{mstep}
Let $k\geq 2$. Suppose that there exist $c > 0$ and $0 < \gamma \leq 1$ such that, for all $m\geq k$, every $m \times \lfloor c m^{k-1} \rfloor$ simple matrix $M$ makes $m^{1 - \gamma}$ contributions. Then there exists $c' > 0$ such that every $\lfloor m \times c' m^{k-1} \rfloor$ simple matrix $M$ makes $m^{1 - \gamma'}$ contributions, where $\gamma'= \frac{\gamma}{k-1+\gamma}$. 
\label{mstep}
\end{mstep}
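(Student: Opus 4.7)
The plan is to partition the rows of $M$ into $t=\lfloor m/m'\rfloor$ disjoint groups $R_1,\ldots,R_t$ of size $m'=\lceil m^{1/(k-1+\gamma)}\rceil$, apply the hypothesis within each group, and sum the contributions. For a fixed group $R_j$, I would scan the columns of $M$ left to right and retain only those giving a previously unseen restriction to $R_j$; this produces a simple $m'\times n_j$ matrix $M_j$ with $n_j=|R_j\cap\mathcal{A}|$, where $\mathcal{A}$ is the family associated to $M$. Provided $n_j\geq c(m')^{k-1}$, the hypothesis applied to the first $c(m')^{k-1}$ columns of $M_j$ yields $(m')^{1-\gamma}$ contributions; since the columns of $M_j$ inherit the relative order of the corresponding columns of $M$, each such contribution pulls back to a valid contribution of $M$ with row-set in $\binom{R_j}{k}$. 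Because the groups are pairwise disjoint, contributions arising in different groups automatically have distinct row-sets and so are distinct contributions of $M$.

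If every group satisfies $n_j\geq c(m')^{k-1}$, then the total count is
\[ t\cdot(m')^{1-\gamma}\;=\;\frac{m}{m'}\,(m')^{1-\gamma}\;=\;m\cdot(m')^{-\gamma}\;=\;m^{1-\gamma/(k-1+\gamma)}\;=\;m^{1-\gamma'}, \]
which is exactly the target. The choice $m'=m^{1/(k-1+\gamma)}$ is calibrated to equate the two exponents.

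The main obstacle is guaranteeing $n_j\geq c(m')^{k-1}$ for every block. I would attack this through Corollary~\ref{isetsplus}: after replacing $\mathcal{A}$ by its down-closed counterpart $\tilde{\mathcal{A}}$, one has $|R_j\cap\mathcal{A}|\geq|R_j\cap\tilde{\mathcal{A}}|$, and for a down family the right-hand side is simply the number of sets of $\tilde{\mathcal{A}}$ contained in $R_j$, which is much easier to control. When $\tilde{\mathcal{A}}$ spreads its mass reasonably uniformly across $[m]$, an averaging or greedy argument (with $c'$ taken large enough relative to $c$) furnishes a partition in which every $R_j$ captures enough small members of $\tilde{\mathcal{A}}$. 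The delicate regime is when $\tilde{\mathcal{A}}$ is concentrated in the power set of some small $T\subset[m]$, since then blocks almost disjoint from $T$ give very few distinct restrictions. In that regime, however, $M$ is effectively a $|T|\times\lfloor c'm^{k-1}\rfloor$ simple matrix, which one can cut into about $(m/|T|)^{k-1}$ consecutive column blocks of width $c|T|^{k-1}$ and feed each to the hypothesis, producing $|T|^{1-\gamma}$ contributions per block; the resulting total, of order $m^{k-1}|T|^{2-k-\gamma}$, already exceeds $m^{1-\gamma'}$ whenever $|T|\leq m^{(k-1)/(k-1+\gamma)}$. A dichotomy on the concentration of $\tilde{\mathcal{A}}$ then ties the two regimes together and closes the argument.
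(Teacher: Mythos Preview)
Your concentrated case (column-cutting when the family lives essentially inside $\mathcal{P}(T)$ with $|T|\le m^{(k-1)/(k-1+\gamma)}=m^{1-\gamma'}$) is exactly the paper's second case, and your threshold and arithmetic there are correct.

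The gap is in the spread case. The row-partition idea cannot be completed as you sketch it: there are down families $\tA$ of size $c'm^{k-1}$, not concentrated in any small $T$, for which \emph{no} partition of $[m]$ into blocks of size $m'=m^{1/(k-1+\gamma)}$ gives every block $|R_j\cap\tA|\ge c(m')^{k-1}$ once $c>1$. For a concrete obstruction with $k=2$ and $\gamma=1$, take $\tA$ to be the down-closure of the edge set of a $d$-regular expander $G$ on $[m]$ with $d=2(c'-1)$. Then $|\tA|\approx c'm$ and $\supp(\tA^{(\ge 2)})=[m]$, so your concentrated branch does not apply; yet for \emph{any} $R_j$ of size $m'=m^{1/2}$ one has $|R_j\cap\tA|=1+m'+e(G[R_j])=m'+O(1)$, since an expander has only $O(d(m')^2/m)=O(1)$ edges inside any set of $m^{1/2}$ vertices. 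No averaging or greedy selection of blocks helps here, and since your calculation $t\cdot(m')^{1-\gamma}=m^{1-\gamma'}$ needs essentially \emph{all} $t$ blocks to be good, the branch fails outright. (The conclusion of the lemma is of course true for this $M$ --- it shatters $(c'-1)m$ pairs --- but your argument does not see it.)

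The paper sidesteps this entirely by not using a row partition in the spread case. It sets $X=\supp(\tA^{(\ge k)})$ and argues: if $|X|\ge km^{1-\gamma'}$ then, because $\tA$ is a down family, every $x\in X$ lies in some member of $\tA^{(k)}$, whence $|\tA^{(k)}|\ge|X|/k\ge m^{1-\gamma'}$; each such $k$-set is shattered by $\tA$, hence by $\A$ via Corollary~\ref{isetsplus}, giving $m^{1-\gamma'}$ contributions directly with no appeal to the inductive hypothesis at all. The hypothesis is invoked only in the small-$|X|$ case, which is exactly your column-cutting argument. Replacing your row-partition branch by this direct count of shattered $k$-sets closes the gap and in fact simplifies the proof.
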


Before proving Lemma \ref{mstep} we make an important definition. The \emph{support} of a family of sets $\A \subseteq \PO [m]$, denoted by $\supp(A)$, is defined to be $$\supp(A) = \{ x\in [m]: \exists A \in \A \mbox{ s.t. } x \in A\} $$

\begin{proof}[Proof of Lemma \ref{mstep}]

Let $M$ be an $m \times \lfloor c' m^{k-1} \rfloor$ simple matrix where $$c' = 2+ c''$$ and $$c'' = 2ck^{k+\gamma-2}.$$ Let $\A$ be the family associated to $M$ and let $\tA$ be the family given by Corollary \ref{isetsplus} applied to $\A$. Also let $X = \supp(\tA^{(\geq k)})$. Since $$\lfloor c'm^{k-1} \rfloor \geq  { m \choose 0} + { m \choose 1 } + \cdots + {m \choose k-1} + c''m^{k-1}$$we have $|X \cap \tA^{(\geq k)}| \geq c'' m^{k-1}$. Therefore, trivially, $|X \cap \tA| \geq c''m^{k-1}$ and so $|X \cap \A| \geq c'' m^{k-1}$ by Corollary \ref{isetsplus}. 

Assume first that $|X| \geq km^{1-\gamma'}$. Then clearly $\tA$ contains at least $m^{1 
- \gamma'}$ sets of size $k$ (indeed, by definition of $X$, each $x\in X$ belongs to at 
least one element of $\tA$ of size at least $k$ and as $\tA$ is a down family, $x$ 
belongs to at least one element of size exactly $k$. However, a set of size $k$ contains 
at most $k$ elements of $X$, hence $|\tA^{(k)}| \geq |X|/k$). As $\tA$ is a down family each element $\tA^{(k)}$ is shattered by $\tA$ and by Corollary \ref{isetsplus}  $\A$ also shatters these sets. Therefore $\A$ shatters at least $m^{1-\gamma'}$ sets of size $k$, and so $M$ makes at least $m^{1-\gamma'}$ 
contributions. 

Suppose now that $|X| \leq km^{1-\gamma'}$.  As $|X \cap \A| \geq c'' m^{k-1}$, there exists an $|X| \times \lceil c'' m^{k-1} \rceil$ simple submatrix of $M$ with row index $X$, call it $M_X$. Write $M_X = [B_1, B_2, \ldots, B_l]$, the concatenation of at least $\left\lfloor c''m^{k-1}/ \lfloor c_k|X|^{k-1} \rfloor \right\rfloor$ simple matrices on $|X|$ rows and $ \lfloor c |X|^{k-1}\rfloor $ columns. By our assumption, each $B_i$ makes at least $|X|^{1-\gamma}$ contributions, so in total $M_X$ (and hence $M$) makes at least 
\begin{eqnarray*} 
\left( \frac{c''m^{k-1}}{c|X|^{k-1}}-1\right)|X|^{1-\gamma} &\geq& \frac{c''}{2c}\frac{m^{k-1}}{|X|^{k-1}}|X|^{1-\gamma} \\
  &\geq& k^{k+\gamma - 2}|X|^{-k-\gamma+2}m^{k-1} \\
  &\geq& m^{k-1-(1-\gamma')(k+\gamma-2)} \\
&=& m^{1-\gamma'} 
\end{eqnarray*}
contributions, by our choice of $c''$ and $\gamma'$. 
\end{proof}

\newtheorem{2FS}[compressions]{Theorem}
\begin{2FS}
For any $\epsilon >0$ there exists a constant $c>0$ such that, for all $m\geq k$, every simple $m \times \lfloor cm^{k-1}\rfloor$ matrix makes at least $m^{1-\epsilon}$ contribution. 
\label{2FS}
\end{2FS}
\begin{proof}
Notice that by Proposition \ref{shattered}, the condition of Lemma \ref{mstep} is true for $\gamma= 1$. By repeatedly applying this lemma, it is easily seen that for any $n \in \N$ there exists $c_n$ such that an $m \times \lfloor c_n m^{k-1} \rfloor$ simple matrix makes at least $m^{1-\gamma_n}$ contributions where $$\gamma_{n+1} = \frac{\gamma_n}{k-1+\gamma_n}$$ and $\gamma_0 = 1$. Now the sequence $(\gamma_n)$ is decreasing and bounded below by $0$ so it must converge to a limit $\gamma$ as $n\to \infty$, and since $\gamma = \gamma /(k-1+\gamma)$ we have $\gamma = 0$.  So by choosing $n$ large enough we may ensure that any $m\times \lfloor c_n m^{k-1} \rfloor$ simple matrix makes  $m^{1-\epsilon}$ contributions. 
\end{proof}
\section{$k \times l$ matrices} 

For the general case, we need to be more efficient in finding shattered $k$-sets when considering the family $\tA$ given by Corollary \ref{isetsplus}. The next lemma will be useful in doing so. 
\newtheorem{sstep}{Lemma}[section]
\begin{sstep}
Let $i,k \in \N$, with $1 \leq i \leq k$ and let $0< \gamma \leq k-1 $. Suppose that there exists a constant $c >0$ such that every simple $m\times \lfloor cm^{k-1} \rfloor$ matrix  makes $m^{k-1 - \gamma}$ contributions. Let $M$ be a simple matrix, $\A$ its associated family and $\tA$ the family given by Corollary \ref{isetsplus}. Suppose that $M$ does not make $m^{k-1 - \gamma'}$ contributions, where $0< \gamma' \leq k-1$. For any $b,d \geq 0$, there exists a constant $d' = d'(k,c,b,d)$ such that if $\tA' \subseteq \tA$ is a subfamily of $\tA$ of size at least $d'm^{k-i}$ then there exists $X \subseteq [m]$ such that 
\begin{enumerate}
\item $|X| \geq bm^{(\gamma' - i  + 1)/\gamma}$ and
\item each $x \in X$ belongs to at least $dm^{k-i-1}$ elements of $\tA'$. 
\end{enumerate}
\label{sstep}
\end{sstep}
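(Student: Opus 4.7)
The plan is to argue by contradiction: set $X = \{x \in [m] : d_{\tA'}(x) \geq dm^{k-i-1}\}$, the set of ``heavy'' elements of $\tA'$, and assume $|X| < bm^{(\gamma'-i+1)/\gamma}$. The target is to produce more than $m^{k-1-\gamma'}$ contributions in $M$, contradicting the lemma's hypothesis that $M$ makes fewer.

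The first step is a Sauer--Shelah-type volumetric estimate. Let $Y = [m]\setminus X$; every $y \in Y$ satisfies $d_{\tA'}(y) < dm^{k-i-1}$, whence $\sum_{A \in \tA'}|A \cap Y| < dm^{k-i}$. Let $\tA'_Y = \tA' \cap \PO(Y)$. If $|\tA'_Y| \geq d'm^{k-i}/2$, the average size of sets in $\tA'_Y$ is less than $2d/d'$, so by Markov at least $d'm^{k-i}/4$ of them have size at most $4d/d'$; but $\PO(Y)$ contains only $\binom{m}{\leq 4d/d'} = O(m^{4d/d'})$ subsets of that size, which is $o(m^{k-i})$ as long as $d'$ is chosen with $4d/d' < k-i$. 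This contradiction forces $|\tA'_Y| < d'm^{k-i}/2$, so at least half of $\tA'$ meets $X$, and hence $\sum_{x \in X}d_{\tA'}(x) \geq d'm^{k-i}/2$: the entire mass of $\tA'$ is concentrated on the tiny set $X$.

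In the second step I exploit this concentration to build many contributions. For each $x \in X$, the link $\tA'_x = \{A\setminus\{x\} : A \in \tA',\, x \in A\}$ in $\PO([m]\setminus\{x\})$ has size $d_{\tA'}(x) \geq dm^{k-i-1}$. Applying Corollary \ref{isetsplus} to each link produces down-families whose shattered $(k-1)$-sets, together with $x$, correspond to shattered $k$-sets in $\A$, hence to contributions in $M$ whose row sets contain $x$. Combining the hypothesis of the lemma on $\lfloor cm^{k-1}\rfloor$-column matrices with a concatenation argument in the style of Lemma \ref{mstep}, applied to sub-matrices of $M$ associated with the links (possibly on row sets $X$ together with blocks from $Y$), I extract many contributions through each heavy $x$; the exponent $(\gamma'-i+1)/\gamma$ is the value which makes the aggregate count, summed over $x \in X$, exceed $m^{k-1-\gamma'}$, producing the desired contradiction.

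The main obstacle lies in this second step: the per-$x$ contribution counts must be combined so as not to double count contributions sharing a row set, and the exponent arithmetic has to match $m^{k-1-\gamma'}$ exactly. This will require a careful block-partition in the spirit of Lemma \ref{mstep}, plus choosing the constant $d' = d'(k, c, b, d)$ sufficiently large---depending on both $b$ and $d$---so that every source of slack in the counting is absorbed.
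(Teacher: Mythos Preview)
Your proposal has a genuine gap in the second step, and you yourself flag it as ``the main obstacle''; the difficulty is real and is not resolved by the plan you sketch. Your first step shows only that at least $d'm^{k-i}/2$ sets of $\tA'$ \emph{meet} $X$, equivalently $\sum_{x\in X} d_{\tA'}(x) \geq d'm^{k-i}/2$. This is much weaker than having a large subfamily of $\tA'$ \emph{supported on} $X$. To invoke the hypothesis ``every simple $m'\times\lfloor c{m'}^{k-1}\rfloor$ matrix makes ${m'}^{k-1-\gamma}$ contributions'' usefully, you need a simple submatrix on a small row set $X'$ with at least $c|X'|^{k-1}$ columns; only then does concatenation force $|X'|$ large. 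From your first step you cannot extract such a submatrix on rows $X$: the sets meeting $X$ may have almost all their elements in $Y$, and restricting them to $X$ may collapse them massively. The link route has the same defect: each $\tA'_x$ has size $\geq dm^{k-i-1}$ but may be supported on all of $[m]\setminus\{x\}$, so you cannot apply the hypothesis with $m'\ll m$, and with $m'=m$ you have far too few columns ($dm^{k-i-1}$ versus the required $cm^{k-1}$). Knowing that $\tA'_x$ shatters some $(k-1)$-sets (and hence that $\A$ shatters the corresponding $k$-sets) does not give a count anywhere near $m^{k-1-\gamma'}$.

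The paper avoids this by \emph{not} taking $X$ to be the set of heavy elements. Instead it runs a greedy peeling process: starting from $\F_0=\tA'$, at each step set $X_{j+1}=\supp(\F_j)$, remove a minimum-size cover $R_{j+1}\subseteq\F_j$ of $X_{j+1}$, and continue while $|\F_j|\geq (d'/2)m^{k-i}$. Since $|R_j|\leq |X_j|\leq m$, the process runs for at least $r\geq (d'/2)m^{k-i-1}$ steps; since the supports are nested, every $x$ in the final support $X=X_r$ lies in one set from each of the pairwise disjoint $R_1,\ldots,R_r\subseteq\tA'$, which gives condition~(2) once $d'\geq 2d$. Crucially, the surviving family $\F_r$ still has size at least $(d'/2)m^{k-i}$ and is \emph{entirely supported on} $X$, so by Corollary~\ref{isetsplus} the simple submatrix of $M$ on rows $X$ has at least that many columns; now the concatenation argument against the hypothesis forces $|X|\geq b\,m^{(\gamma'-i+1)/\gamma}$ provided $d'\geq 4cb^{k-1}$. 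The peeling simultaneously manufactures the degree lower bound and the small support carrying a large family; your heavy-element set gives the first automatically but provides no handle on the second.
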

\begin{proof}
Let $\tA'$ be a subfamily of $\tA$ of size at least $d'm^{k-i}$ where $$ d' =\max\{4cb^{k-1} , 2d\} .$$  Consider the finite sequences $(X_j)_{j=0}^{r}$, $(R_j)_{j=0}^{r}$ and $(\mathcal{F}_j)_{j=0}^{r}$ generated by the following algorithm:

Begin with $X_0 = [m]$,  $\mathcal{F}_0 = \tA'$, $R_0 = \emptyset$. Thereafter,
\begin{itemize}
\item
if $|\mathcal{F}_j| < \frac{d'}{2}m^{k-i}$ then STOP. 
\item
otherwise 

\begin{itemize}
\item let $X_{j+1} = \supp(\F_j)$,
\item 
let $R_{j+1}$ be a subset of $\mathcal{F}_j$ of minimal size such that for any $x \in X_{j+1}$ there exists $A \in R_{j+1}$ with $x\in A$,
\item
let $\F_{j+1}=  \F_j \backslash R_{j+1}$.
\end{itemize}
\end{itemize}
Clearly the algorithm does eventually stop because $|\F_{j+1}| < |\F_j|$. We take $X$ to be $X_r$; let us check that it satisfies the conclusion of the lemma. 

First we show that $X$ has the correct size. Consider a simple submatrix $M'$ of $M$ 
whose rows are indexed by $X$. As $|\F_r| \geq \frac{d'}{2}m^{k-i}$ there exists such a 
matrix on at least $\frac{d'}{2}m^{k-i}$ columns (this is due to Corollary \ref{isetsplus}, 
given that  $\F_r$ is a subfamily of $\tA$). $M'$ contains the concatenation of at least 
$\frac{d'}{2}m^{k-i}/(c|X|^{k-1}+1)$ simple matrices on $m$ rows and at least 
$c|X|^{k-1}$ columns each. By assumption, each of these matrices makes at least $|X|^{k-1- 
\gamma}$ contributions. However, $M$ does not make $m^{k-1 - \gamma'} $ contributions. 
Therefore, one has 

$$\frac{\frac{d'}{2}m^{k-i}}{2c|X|^{k-1}}|X|^{k-1-\gamma} \leq m^{k-1 - \gamma'}, $$ 
and so
$$|X| \geq \left( \frac{d'}{4c}\right)^{\frac{1}{\gamma}}m^{(\gamma' - i  + 1)/\gamma}.$$
As $\gamma \leq k-1$ this implies

$$|X| \geq \left( \frac{d'}{4c}\right)^{\frac{1}{k- 1}}m^{(\gamma' - i  + 1)/\gamma} $$

as required, since $d'$ is large enough that $\left( \frac{d'}{4c}\right)^{\frac{1}{k- 1}}\geq b$.

It remains to show that the second requirement on $X$ holds. As each $X_j$ is a subset of $[m]$, $|R_j| \leq m$. Thus $|\F_j| \geq |\F_0| - jm$. Since $|\F_0| \geq d'm^{k-i}$, the algorithm does not stop in fewer than $\frac{d'}{2}m^{k-b-1}$ steps; equivalently $r\geq \frac{d'}{2}m^{k-i-1}$. Since $X_0 \supseteq X_1 \supseteq \cdots \supseteq X_r$, each $x \in X$ belongs to an element of $R_j$ for $j=0,1,\ldots,r$. As these are pairwise disjoint subsets of $\tA'$, this finishes the proof of the lemma, since $d' \geq 2d$.  
\end{proof}

We denote by $f(k,c,b,d)$ the minimal value of $d'$, as a function of $k$, $c$, $b$ and $d$, which guarantees the existence of the set $X$ in Lemma \ref{sstep}. We now prove an equivalent of Lemma \ref{mstep} for general $k$. 

\newtheorem{bstep}[sstep]{Lemma}
\begin{bstep}
Let $k \in \N$. Let $0 \leq \gamma \leq k-1$ be a real number. Suppose that there exists a constant $c >0$ such that every simple $m\times  \lfloor cm^{k-1} \rfloor$ matrix  makes  $m^{k-1 - \gamma}$ contributions. Then there exists a constant $c'$ such that, for all $m\geq k$, every simple $m\times \lfloor c'm^{k-1} \rfloor $ matrix makes  $m^{k-1-\gamma'}$ contributions, where $$\gamma' = \frac{-2\gamma - 1+ \sqrt{(2\gamma+1)^2 + 8\gamma(k-1)}}{2}.$$
\label{bstep}
\end{bstep}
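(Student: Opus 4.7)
The plan is to prove the lemma by contradiction. Assume $M$ is a simple $m\times \lfloor c'm^{k-1}\rfloor$ matrix making fewer than $m^{k-1-\gamma'}$ contributions; I will exhibit more than that many shattered $k$-subsets in the down family $\tA$ obtained by applying Corollary~\ref{isetsplus} to $M$'s associated family $\A$. Since $\tA$ is down-closed, every $k$-subset of $\tA$ is shattered by $\tA$, hence by $\A$ (Corollary~\ref{isetsplus}), yielding a contribution of $M$; so it suffices to produce more than $m^{k-1-\gamma'}$ elements of $\tA^{(k)}$. Picking $c'$ sufficiently large relative to the Sauer--Shelah threshold guarantees $|\tA^{(\geq k)}| \geq c_0 m^{k-1}$ for a constant $c_0$ arbitrarily large, which seeds the iteration.

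The heart of the argument is an $s$-fold iteration of Lemma~\ref{sstep}. Setting $\tA_0 = \tA^{(\geq k)}$, at step $j = 1, \ldots, s$ I apply Lemma~\ref{sstep} to $\tA_{j-1}$ with parameter $i = j$. This yields a set $X_j \subseteq [m]$ of size at least $b_j m^{(\gamma'-j+1)/\gamma}$ in which every element belongs to at least $d_j m^{k-j-1}$ sets of $\tA_{j-1}$. I pick $x_j \in X_j$ and define $\tA_j = \{A \setminus \{x_j\} : A \in \tA_{j-1},\ x_j \in A\}$, which is a subfamily of $\tA$ (since $\tA$ is down-closed) of size at least $d_j m^{k-j-1}$, enabling the next iteration. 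Because $X_j \subseteq \supp(\tA_{j-1}) \subseteq [m]\setminus\{x_1,\ldots,x_{j-1}\}$, the chosen elements are automatically distinct. The constants $(b_j, d_j)$ are fixed by backward induction from $j = s$ so that each $d_j$ majorises the $d'_{j+1}$ demanded by Lemma~\ref{sstep} at the subsequent step, with the initial constraint $c_0 \geq d'_1$ absorbed into a single finite $c'$.

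After $s$ iterations, $\tA_s$ has at least $D_s m^{k-s-1}$ elements, so Proposition~\ref{shattered} produces a $(k-s)$-subset $S$ shattered by $\tA_s$; shattering implies some element of $\tA_s$ contains $S$, and unravelling the definition gives $S \cup \{x_1,\ldots,x_s\} \in \tA^{(k)}$ by the down-closure of $\tA$. Ranging $(x_1,\ldots,x_s)$ over all admissible ordered tuples gives at least $C\, m^{(s\gamma' - s(s-1)/2)/\gamma}$ such tuples (for a positive constant $C$), each extending to a shattered $k$-set, and each $k$-set being counted at most $k!/(k-s)!$ times, producing at least a constant times $m^{(s\gamma' - s(s-1)/2)/\gamma}$ distinct elements of $\tA^{(k)}$.

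For the contradiction, I need $(s\gamma' - s(s-1)/2)/\gamma \geq k - 1 - \gamma'$. Taking $s$ to be (the nearest integer to) $\gamma'$ makes the left-hand side essentially $\gamma'(\gamma'+1)/(2\gamma)$, and the inequality becomes $\gamma'(\gamma'+1) \geq 2\gamma(k-1-\gamma')$, i.e.\ $\gamma'^2 + (2\gamma+1)\gamma' \geq 2\gamma(k-1)$---exactly the quadratic whose positive root defines the stated value of $\gamma'$. The main obstacle is the careful cascade of constants $(b_j, d_j, D_j)$ through the iteration so that a single $c'$ suffices throughout; a minor secondary point is that integer rounding of $s$ perturbs the exponent by a lower-order amount that can be absorbed by enlarging the $b_j$'s.
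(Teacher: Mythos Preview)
Your proposal is correct and follows essentially the same route as the paper: iterate Lemma~\ref{sstep} to build roughly $m^{\sum_{j=1}^{s}(\gamma'-j+1)/\gamma}$ ordered tuples $(x_1,\ldots,x_s)$ (the paper calls these \emph{good sequences}), convert each via down-closure of $\tA$ into an element of $\tA^{(k)}$, and choose $s$ so that the exponent matches $k-1-\gamma'$ via the defining quadratic. The paper takes $s=\lfloor\gamma'+1\rfloor$ (which you should too---``nearest integer to $\gamma'$'' can undershoot the needed bound, and this is an exponent issue, not something the constants $b_j$ can absorb) and replaces your final Sauer--Shelah step by the simpler observation that non-emptiness of $\tA_s$ already places $\{x_1,\ldots,x_s\}$ inside some member of $\tA^{(k)}$, followed by a double count of pairs in $\tA^{(s)}\times\tA^{(k)}$; otherwise the two arguments coincide.
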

\begin{proof}
For notational simplicity, let $a = \lfloor \gamma' + 1 \rfloor$ and define $$c_1 = f(k,c,a!{k  \choose a},1)$$ and for $2\leq i \leq a$   $$c_i = f(k,c,1,c_{i-1}).$$ 

It is straightforward to check that $\gamma \in [0,k-1]$ implies $\gamma' \in [0,k-1]$ which in turn implies that $a \in [1,k]$. All the sums of the form $\sum_{i=1}^{j} \frac{\gamma' - j + 1}{\gamma} $ below are therefore well-defined and the applications of Lemma \ref{sstep} below are justified.

Let $M$ be an $m\times \lfloor c'm^{k-1} \rfloor$ simple matrix where $$ c' = 2+c_a.$$ Suppose for a contradiction that $M$ does not make $m^{k-1-\gamma'}$ contributions. Let $\A$ denote the family of subsets of $[m]$ associated with $M$ and let $\tA$ be the family given by Corollary \ref{isetsplus} applied to $\A$.

Given $S \subseteq [m]$ we denote by $\tA^{(\geq k)}_S$ the subfamily of $\tA^{(\geq k)}$ consisting of those sets containing $S$. 
For $1 \leq i \leq a$ we call $(x_j)_{j=1}^i$ a \emph{good} sequence if $x_j \in [m]$ for all $1\leq j \leq i$ and  

$$|\tA^{(\geq k)}_{\{x_1,x_2,\ldots,x_i \}}| \geq c_{a-i}m^{k-i-1}$$ 

We make the following claim: there are at least  $m^{\sum_{j=1}^{i}\frac{\gamma' - j + 1}{\gamma}} $ good sequences for all $1 \leq i \leq a-1$ and at least $a!{ k \choose a} m^{\sum_{j=1}^{i}(1 - 
\frac{\gamma - \gamma' +j - 1}{\gamma})}  $  good sequences when $i=a$. 

We prove the claim by induction on $i$. For $i=1$, this is Lemma \ref{sstep} (with $i=1$, $b = c_{a}$ and $d = 1$) applied to 
$\tA^{(\geq k)}$: the good sequences are the elements of the set $X$ given by the 
lemma.  Suppose now that the result holds for $i <a$. Let $(x_j)_{j=1}^i$ be a good 
sequence.  Apply Lemma \ref{sstep} (with $i$, $b = c_{a-i+1}$ and $d = 1$) to $$\{A \backslash \{x_1,x_2,\ldots,x_i\}: \, A \in 
\tA^{(\geq k)}_{\{x_1,x_2,\ldots,x_i \}}  \} $$ which is a subfamily of $\tA$ (recall 
the latter is a down family). For any $x_{i+1}$ chosen in the set $X$ so obtained, 
$(x_j)_{j=1}^{i+1}$ is a good sequence since a set in $\{A \backslash \{x_1,x_2,\ldots,x_i\}: \, A 
\in \tA^{(\geq k)}_{\{x_1,x_2,\ldots,x_i \}}  \}$ containing $x_{i+1}$ extends uniquely 
to a set of $\tA^{(\geq k)}$ containing $\{x_1,x_2,\ldots,x_{i+1} \}$. We have $m^{(\gamma' - i  + 1)/\gamma}$ choices for $x_{i+1}$ when $i\leq a-1$ and $a!{k \choose a}m^{\frac{\gamma'-a+1}{\gamma}}$ when $i=a$, hence the claim.  

Good sequences provide a lower bound for the number of sets of size $k$ in $\tA$. Let us count the set 
$$Z 
= \{ (Z_1,Z_2) \in \tA^{(a)} \times \tA^{(k)} : Z_1 \subseteq Z_2 
\}  $$
in two different ways. On the one hand, it is clear that $|Z| \leq {k \choose a} |\tA^{(k)}|$. On the other hand, each good sequence consists of elements belonging to a set of size $a$ contained in at least one element of $\tA^{(\geq k)}$, and hence in at least one element of $\tA^{(k)}$ using the fact that $\tA$ is a down family. No more than $a!$ good sequences define the same set of size $a$, hence $|Z| \geq { k \choose a} m^{\sum_{j=1}^{a}\frac{\gamma' - j + 1}{\gamma}}$. Combining the two inequalities on $|Z|$ gives $|\tA^{(k)}| \geq  m^{\sum_{j=1}^{a}\frac{\gamma' - j + 1}{\gamma}}$. Now each set of size $k$ in $\tA$ translates into one contribution in $M$ (again by Corollary \ref{isetsplus}), hence $M$ makes at least $$ m^{\sum_{j=1}^{a}\frac{\gamma' - j + 1}{\gamma}}$$ contributions. 
Notice that 
\begin{equation} 
\label{simplification}
\sum_{j=1}^{\lfloor\gamma'+1\rfloor}\frac{\gamma' - j + 1}{\gamma} \geq \frac{\gamma'(\gamma'+1)}{2\gamma}.
\end{equation}
(this can be justified by seeing that $\sum_{j=1}^{\lfloor\gamma'+1\rfloor}(\gamma' - j + 1)/\gamma = f(\lfloor\gamma'+1\rfloor)$ where $f(x) = x(\gamma'+1)/\gamma - x(x+1)/(2\gamma)$; thus $f$ is a polynomial in $x$ of degree 2 attaining a maximum at $\gamma' + 1/2$ hence $f(\lfloor\gamma'+1\rfloor) \geq f(\gamma')$ which is the  inequality above). Hence $M$ makes at least $m^{\gamma'(\gamma'+1)/(2\gamma)}$ contributions. The solution lying in $[0,k-1]$ to the second degree equation (in $x$) $$k-1-x =  \frac{x(x+1)}{2\gamma} $$ is precisely $\gamma'$. This means that $M$ makes at least $m^{k-1-\gamma'}$ contributions and we have reached a contradiction.
\end{proof}

Applying Lemma \ref{bstep} repeatedly gives the following theorem. 

\newtheorem{preFS}[sstep]{Theorem}
\begin{preFS}
Let $k \in \N$, with $k\geq 3$. For any $\epsilon >0$ there exists a constant $c> 0$ such that every $m\times \lfloor cm^{k-1} \rfloor$ simple matrix makes at least $m^{k-1-\alpha-\epsilon}$ contributions, where $$\alpha = \frac{2k}{3} -1.$$
\label{preFS}
\end{preFS}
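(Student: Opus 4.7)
The plan is to iterate Lemma \ref{bstep} starting from the trivial base provided by Proposition \ref{shattered}: the Sauer--Shelah argument recalled in Section 2 shows that for a suitable $c_0 > 0$ every simple $m\times \lfloor c_0 m^{k-1}\rfloor$ matrix makes at least $1 = m^{k-1-(k-1)}$ contributions, so Lemma \ref{bstep} applies with $\gamma = k-1$. Writing
$$\phi(\gamma) = \frac{-2\gamma-1+\sqrt{(2\gamma+1)^2+8\gamma(k-1)}}{2},$$
setting $\gamma_0 = k-1$ and $\gamma_{n+1} = \phi(\gamma_n)$, repeated applications of Lemma \ref{bstep} produce constants $c_n > 0$ such that every simple $m\times \lfloor c_n m^{k-1}\rfloor$ matrix makes at least $m^{k-1-\gamma_n}$ contributions. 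The theorem therefore reduces to showing $\gamma_n \to \alpha = 2k/3 - 1$.

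To analyse the iteration, I would first identify the fixed points of $\phi$ on $[0,k-1]$: the equation $\phi(\gamma)=\gamma$ squares and rearranges to $\gamma\bigl(12\gamma - 8(k-1) + 4\bigr) = 0$, giving exactly $\gamma = 0$ and $\gamma = \alpha$. Differentiating,
$$\phi'(\gamma) = -1 + \frac{2\gamma+2k-1}{\sqrt{(2\gamma+1)^2+8\gamma(k-1)}},$$
and a direct comparison of squares yields $(2\gamma+2k-1)^2 - \bigl[(2\gamma+1)^2 + 8\gamma(k-1)\bigr] = 4k(k-1) > 0$, so $\phi$ is strictly increasing on $[0,k-1]$.

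With monotonicity in hand, if $\gamma_n > \alpha$ then $\gamma_{n+1} = \phi(\gamma_n) > \phi(\alpha) = \alpha$; since $\gamma_0 = k-1 > \alpha$ for $k \geq 3$, induction gives $\gamma_n > \alpha$ for all $n$. On $(\alpha, k-1]$ the continuous function $\phi(\gamma)-\gamma$ has no zeros, hence has constant sign, which is negative by direct evaluation at $\gamma = k-1$: after squaring, the inequality $\phi(k-1) < k-1$ reduces once again to $4k(k-1) > 0$. Therefore $(\gamma_n)$ is strictly decreasing and bounded below by $\alpha$, so it converges; by continuity of $\phi$ the limit is a fixed point in $[\alpha,k-1]$, necessarily $\alpha$. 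Given $\epsilon > 0$, one then chooses $N$ with $\gamma_N < \alpha + \epsilon$ and takes $c = c_N$ to conclude.

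The main obstacle is this convergence analysis, and in particular ruling out the spurious fixed point $\gamma = 0$; this is handled by the monotonicity of $\phi$, which forces every iterate to remain strictly above $\alpha$. Once this is in place, everything else is routine bookkeeping with the constants produced by successive applications of Lemma \ref{bstep}.
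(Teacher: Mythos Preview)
Your proof is correct and follows essentially the same approach as the paper: iterate Lemma \ref{bstep} from the base $\gamma_0 = k-1$ given by Sauer--Shelah, and show the resulting sequence $\gamma_n$ decreases to the fixed point $\alpha = 2k/3-1$. You give more explicit justification (computing $\phi'$ and identifying both fixed points), whereas the paper simply asserts that $h$ is increasing with $h(x)<x$ on $(\alpha,k-1]$ and $h(\alpha)=\alpha$, but the substance is identical.
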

\begin{proof}
Notice that by Proposition \ref{shattered}, the condition of Lemma \ref{bstep} is true for $\gamma = k-1$. By repeatedly applying this lemma, it is easily seen that for any $n\in \N$ there exists $c_n$ such that an $m\times \lfloor c_n m^{k-1} \rfloor$ simple matrix makes $m^{k-1-\gamma_n}$ contributions, where $$\gamma_{n+1} =  \frac{-2\gamma_n - 1 + \sqrt{(2\gamma_n+1)^2 +8\gamma_n(k-1)}}{2}$$ and $\gamma_0 = k-1$. We will now show that $\gamma_n$ tends to $\alpha$ as $n\to \infty$. In order to do so it is convenient to consider the real-valued function $h$ defined on $[\alpha, k-1]$ by $$ h(x) = \frac{-2x - 1 + \sqrt{(2x+1)^2 + 8x(k-1)}}{2}.$$ It is straightforward to check that 
\begin{itemize}
\item
$h$ is strictly increasing
\item
$h(x) < x$ for $x\in\, (\alpha, k-1]$ and $h(\alpha) = \alpha.$
\end{itemize}
This implies that $(\gamma_n)$ is a decreasing sequence tending to $\alpha$ as $n \to \infty$, and we are done as in the proof of Theorem \ref{2FS}. 
\end{proof}

\section{Proof of Theorem \ref{maintheorem}}
\begin{proof}[\nopunct]
The first part follows from applying Theorem \ref{2FS} to Lemma \ref{basiclemma}. The second part follows from applying Theorem \ref{preFS} to Lemma \ref{basiclemma}. 
\end{proof}
\section{Concluding remark}
In the proof of Lemma \ref{bstep} we made the simplification (\ref{simplification}). While this does not affect the asymptotic for $\fs(m,F)$ stated in Theorem \ref{maintheorem} for large $k$, it can lead to a small overestimation for some values of $k$. For example, a simple computer program finding the best possible value of $\gamma'$ at each application of Lemma \ref{bstep} suggests that one may prove by hand for $k = 4$ that $\fs(m,F) = O(m^{5.618+o(1)})$, while the bound of Theorem \ref{maintheorem} is roughly $m^{5.6667 +o(1)}$. Likewise for $k = 5$ one should be able to show that $\fs(m,F) = O(m^{7.3028})$ while the stated bound is $\fs(m,F) = O(m^{7.3333+o(1)})$, and so on for larger values of $k$. 

\section*{Acknowledgement}
The author wishes to thank Andrew Thomason for his invaluable comments and suggestions regarding the presentation of this paper.

\end{document}